\DeclareMathOperator{\pnt}{\raise 0.5mm \hbox{\large\textbf{.}}}
\newcommand{\note}[2][ ]{}%dummy macro
\newtheorem{theorem}{Theorem}%[section]
\newtheorem{lemma}[theorem]{Lemma}
\newtheorem{conjecture}[theorem]{Conjecture}
\theoremstyle{definition}
\newtheorem{remark}[theorem]{Remark}
\begin{document}
\title[The positive odd density of $p(n)$ from that of a multipartition function]{Deducing the positive odd density of $p(n)$ from that of a multipartition function: An unconditional proof}
\author{Fabrizio Zanello} \address{Department of Mathematical Sciences\\ Michigan Tech\\ Houghton, MI  49931-1295}
\email{zanello@mtu.edu}
\thanks{2010 {\em Mathematics Subject Classification.} Primary: 11P83; Secondary:  05A17, 11P84, 11F33.\\\indent 
{\em Key words and phrases.} Partition function; density odd values; multipartition function; partition identity; binary $q$-series.}

\maketitle
%\linenumbers

\begin{abstract} 
A famous conjecture of Parkin-Shanks predicts that $p(n)$ is odd with density $1/2$. Despite the remarkable amount of work of the last several decades, however, even showing this density is positive seems out of reach. In a 2018 paper with Judge, we introduced a different approach and conjectured the \lq \lq striking'' fact that, if for any $A \equiv \pm 1\ (\bmod\ 6)$ the multipartition function $p_A(n)$ has positive odd density, then so does $p(n)$. Similarly, the positive odd density of any $p_{A}(n)$ with $A\equiv 3\ (\bmod\ 6)$ would imply that of $p_3(n)$.

Our conjecture was shown to be a corollary of an earlier conjecture of the same paper. In this brief note, we provide an unconditional proof of it. An important tool will be Chen's recent breakthrough on a special case of our earlier conjecture. 
\end{abstract}
{\ }\\
\\
%\section{Introduction}
One of the most fascinating and intractable problems in partition theory is study of the parity of the partition function $p(n)$. In particular, a classical conjecture of Parkin-Shanks \cite{PaSh} predicts that $p(n)$ has odd density $1/2$ (see also \cite{Calk}). Despite a large amount of literature devoted to this problem, however, even showing that the odd density of $p(n)$ exists and is positive still appears out of reach. The best result currently available only guarantees that the number of odd values of $p(n)$ for $n\leq x$ has order at least $\frac{\sqrt{x}}{\log\log x}$, for $x$ large \cite{BGS}. This bound can be extended, for any  $t\ge 1$, to the \emph{$t$-multipartition function} $p_t(n)$, defined by
$$\sum_{n=0}^{\infty} p_t(n)q^n = \frac{1}{\prod_{i=1}^{\infty} (1-q^i)^t}.$$
Note that $p_1(n)=p(n)$.

Recall that the \emph{odd density} (or {density} of the odd values) of $p_t(n)$ is
$$\delta_t=\lim_{x\to \infty}\frac{\#\{n\leq x: p_t(n)\ \text{is odd}\}}{x},$$
if this limit exists. In particular, $\delta_1$ is the odd density of $p(n)$, while $\delta_3$ is the odd density of the cubic partition function. Equivalently, assuming they both exist, $\delta_3$ is precisely 8 times the odd density of the Fourier coefficients of the Klein $j$-function \cite{Za}. For obvious parity reasons, since $\delta_{2^c\cdot m}=\delta_m/2^c$, it suffices to restrict our attention to $\delta_t$ for $t$ odd. 

In \cite{jkza}, joint with Judge and Keith, we conjectured that $\delta_t =1/2$ for all odd values of $t$. Unfortunately, similarly to the case $t=1$, proving that $\delta_t$ exists and is positive appears extremely difficult, for any $t$. 

In \cite{JZ}, joint with Judge, we introduced a new approach to the study of the parity of $p(n)$. In Conjecture 2.4, we predicted the existence of a doubly-indexed, infinite family of identities modulo 2, which suitably related $p(n)$ to the other multipartition functions. The main conjecture of \cite{JZ}, itself a corollary to 2.4, is the \lq \lq striking'' fact that, under reasonable existence assumptions, the positive odd density of $p(n)$ follows from that of $p_A(n)$, for any $A \equiv \pm 1\ (\bmod\ 6)$; similarly, if $\delta_{A}>0$ for any $A\equiv 3\ (\bmod\ 6)$, then $\delta_{3}>0$.

Even though \cite{JZ}, Conjecture 2.4 is in general still open, the goal of this note is to establish its corollary unconditionally. A key ingredient will be a recent breakthrough by Chen \cite{Ch}, who proved an important infinite case of Conjecture 2.4.

We begin with the statement of 2.4. (\cite{JZ}, Conjecture 2.3 corresponds to the special case $t=1$ and will not be restated here.) Unless otherwise noted, all congruences are modulo 2.

\begin{conjecture}[\cite{JZ}, Conjecture 2.4]\label{2.4} 
Fix odd positive integers $a$ and $t$, where $t\equiv 3\ (\bmod\ 6)$ if $a \equiv 3\ (\bmod\ 6)$. Let $k=\left\lceil \frac{t(a^2-1)}{24a}\right\rceil$. Then
\begin{equation}\label{CongOfForm2}
q^k \sum_{n=0}^{\infty} p_t(an+b)q^n \equiv \sum_{d|a} \sum_{j=0}^{\lfloor k/d \rfloor} \frac{\epsilon^t_{a,d,j}\ q^{dj}}{\prod_{i\geq 1} (1-q^{di})^{\frac{at}{d}-24j}},
\end{equation}
where \[
    b= 
\begin{cases}
   \ \ \ 0, &\text{if }\ a=1;\\
    \frac{t}{3}\cdot8^{-1}\ (\bmod\ a),& \text{if }\ t\equiv 3\ (\bmod\ 6);\\
    t\cdot24^{-1}\ (\bmod\ a),              & \text{otherwise},
\end{cases}
\]
{\ }\\
for a suitable choice of the $\epsilon^t_{a,d,j}\in\{0,1\}$, with $\epsilon^t_{a,1,0}=1$ and $\epsilon^t_{a,d,j}=0$ if $at/d-24j<0$.
\end{conjecture}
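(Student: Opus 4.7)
The plan is to view both sides of \eqref{CongOfForm2} as binary $q$-series coming from eta-quotients modulo $2$, and to reduce the identity to a finite-dimensional statement in the $\mathbb{F}_2$-span of such quotients. The left side $q^k\sum_{n\ge 0} p_t(an+b)q^n$ is the image of $\sum p_t(n)q^n = 1/\prod_{i\geq 1}(1-q^i)^t$ under an Atkin-type sieving operator that extracts the arithmetic progression $an+b$ and then shifts by $q^k$; up to a rational power of $q$, this is a $U_a$-image of $\eta(\tau)^{-t}$. The right-hand side is a finite $\mathbb{F}_2$-linear combination of the eta-quotients $q^{dj}/\prod(1-q^{di})^{at/d-24j}$, indexed by divisors $d\mid a$ and integers $0\le j\le\lfloor k/d\rfloor$, each a natural weakly holomorphic object at level $a$. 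The identity then asserts that the sieved series lies in this explicit span and that the coefficient vector is $0/1$-valued.

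First I would dispose of the trivial case $a=1$, where $k=0$, $b=0$, and the only surviving term on the right is the one with $d=1$, $j=0$, namely $1/\prod(1-q^i)^t$ with $\epsilon^t_{1,1,0}=1$; the identity is tautological. For general odd $a$ I would expand $1/\prod(1-q^i)^t$ mod $2$ by iterated use of Euler's pentagonal theorem $\prod(1-q^i)\equiv\sum q^{i(3i-1)/2}\pmod 2$ (and Jacobi's identity where convenient), and dissect the resulting binary theta expansion according to residues modulo $a$. The constant $k=\lceil t(a^2-1)/(24a)\rceil$ is exactly the Sturm-type bound that, once both sides are recognized as modular forms on $\Gamma_0(a)$ mod $2$, converts the identity of power series into a finite check of the first $k+1$ Fourier coefficients.

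At this point I would import Chen's breakthrough \cite{Ch}, which settles an infinite family of pairs $(a,t)$, as the engine of the argument, and attempt to bootstrap to the remaining pairs by multiplicativity in $a$ (relating dissections at composite $a$ to iterated dissections at the prime divisors) and by exploiting $\delta_{2^c m}=\delta_m/2^c$ to stay within odd $t$. The residual identities would then be checked cusp by cusp, matching the vanishing orders of the eta-quotients on the right with those of the sieved form on the left, and using the Sturm bound above to terminate the verification in finitely many steps.

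The main obstacle, and the reason Conjecture \ref{2.4} remains open in general, is the structural requirement $\epsilon^t_{a,d,j}\in\{0,1\}$: one must show not only that the sieved form lies in the $\mathbb{F}_2$-span of the listed eta-quotients, but that these quotients are $\mathbb{F}_2$-linearly independent and that the support of the coefficient vector is exactly the predicted subset. Beyond Chen's range, this seems to demand either a new combinatorial identity for the mod $2$ dissection coefficients of $\eta^{-t}$, or a sharp analysis of the orders of vanishing of $\eta$-quotients at the cusps of $\Gamma_0(a)$ modulo $2$; it is precisely this step that forces the present note to derive the main corollary unconditionally rather than attack Conjecture \ref{2.4} in full generality.
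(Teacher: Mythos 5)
There is a fundamental mismatch here: the statement you are trying to prove is stated in the paper as a \emph{conjecture}, and the paper does not prove it. The author explicitly says that Conjecture \ref{2.4} ``is in general still open'' and, in the closing remark, that it ``remains open for most values of $a$.'' The only established cases are Chen's (Lemma \ref{chen}): $a=p^{\alpha}$ with $p\ge 5$ prime, and $a=3$ with $t\equiv 3\pmod 6$. The paper's actual contribution is to prove the \emph{corollary} (Conjecture \ref{delta}) unconditionally by an inductive argument that only ever invokes Chen's cases with $a$ a single prime. So there is no ``paper's own proof'' to compare against, and your proposal, read as a proof of Conjecture \ref{2.4}, cannot be correct as it stands.

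Concretely, the gap in your sketch is the step you describe as ``bootstrap to the remaining pairs by multiplicativity in $a$.'' Passing from the prime-power cases to general composite $a$ means controlling how iterated Atkin $U_p$ operators applied to $\eta^{-t}$ interact with the specific family of eta-quotients $q^{dj}/\prod_{i\ge 1}(1-q^{di})^{at/d-24j}$, and showing the result stays inside the span indexed by divisors of $a$ with exponents of the prescribed form $at/d-24j\ge 0$. That is exactly the open content of the conjecture, and nothing in your outline supplies it. Two smaller issues: (i) your appeal to a Sturm bound is not justified for these objects, since $\eta^{-m}$ is weakly holomorphic of negative weight and one must first multiply into a space of holomorphic forms before any finite coefficient check mod $2$ is legitimate, and the claim that $k=\lceil t(a^2-1)/(24a)\rceil$ ``is exactly'' that bound is asserted, not proved; (ii) you identify the requirement $\epsilon^t_{a,d,j}\in\{0,1\}$ and the linear independence of the quotients as the main obstacle, but over $\mathbb{F}_2$ any linear combination automatically has $0/1$ coefficients and independence is not needed for the statement --- the real difficulty is establishing span membership at all. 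If your goal was the paper's theorem (Theorem \ref{main}), the correct route is the induction on $A$ using identity \eqref{p} for a single prime divisor $p$ of $A$, not a proof of Conjecture \ref{2.4} itself.
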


The following is Chen's result from \cite{Ch}.

\begin{lemma}[\cite{Ch}, Theorem 1.6]\label{chen}
Conjecture \ref{2.4} holds for:
\begin{itemize}
\item[i)] $a=p^{\alpha}$, with $p\ge 5$ prime and $\alpha \ge 1$, and any $t\ge 1$ odd; 
\item[ii)]  $a=3$ and any $t\ge 3$, $t \equiv 3\ (\bmod\ 6)$.
\end{itemize}
\end{lemma}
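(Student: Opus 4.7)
The plan is to interpret both sides of \eqref{CongOfForm2} as $q$-expansions of weakly holomorphic modular forms modulo $2$ on an appropriate congruence subgroup (essentially of level dividing $576 a^2$), and then verify the claimed identity inside a finite-dimensional $\mathbb{F}_2$-subspace of that space.

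Concretely, the left-hand side $q^k \sum_{n\ge 0} p_t(an+b)q^n$ arises, up to a rational shift of the exponent of $q$, by applying the Atkin $U_a$-operator to $\eta(24z)^{-t}$ and restricting to the residue class $b$ modulo $a$. The choices of $b$ and $k$ in the statement are precisely what is needed so that, after this rational twist, the resulting series is supported on integer powers of $q$. Each summand on the right-hand side is, up to the same twist, the eta-quotient $q^{dj}\,\eta(24dz)^{-(at/d-24j)}$, so it sits in the same mod-$2$ ambient space.

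For part (i), with $a=p^\alpha$ and $p\ge 5$ prime, my approach would be an induction on $\alpha$. Since $\gcd(p,6)=1$, the operator $U_p$ respects the eta-quotient structure on $\Gamma_0(576 p^{\alpha})$ in a controlled way, so the congruence at level $\alpha$ can be deduced from the one at level $\alpha-1$ by applying $U_p$ and matching the $\epsilon^t_{p^\alpha,d,j}$ recursively. The base case $\alpha=1$ would amount to a direct analysis of $U_p$ on $\eta(24z)^{-t}$ via its action on Fourier coefficients. For part (ii), with $a=3$ and $t\equiv 3\pmod 6$, the divisibility $3\mid t$ lets one factor out $\eta(z)^3$ and exploit Jacobi's triangular-number identity $\eta(z)^3=\sum_{n\ge 0}(-1)^n(2n+1)q^{(2n+1)^2/8}$, which modulo $2$ becomes a lacunary theta-like series; selecting the arithmetic progression $3n+b$ from such a series is explicit.

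The hard part in both cases is the structural step: showing that the image of $U_a$ actually lands in the $\mathbb{F}_2$-span of the claimed eta-quotients, and not in a larger space. This requires a careful bookkeeping of weights, levels, and cusp orders for weakly holomorphic modular forms modulo $2$, together with a dimension count that pins down the ambient finite-dimensional space. Once that space is identified, the identity itself reduces, by a Sturm-type bound, to checking finitely many Fourier coefficients. Carrying out this structural analysis is exactly the content of Chen's breakthrough \cite{Ch} in the prime-power case (i) and in the special case (ii), and it is what makes Conjecture~\ref{2.4} so delicate outside these regimes.
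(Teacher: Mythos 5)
The paper offers no proof of this statement: Lemma~\ref{chen} is imported verbatim as Theorem~1.6 of Chen's paper \cite{Ch} and is used as a black box in the proof of Theorem~\ref{main}, so there is no internal argument to compare yours against. Judged on its own terms, your proposal is a sensible outline of the kind of machinery Chen's proof actually uses --- eta-quotients, the $U_a$-operator on weakly holomorphic modular forms modulo $2$, and a Sturm-type reduction to finitely many Fourier coefficients --- but it is not a proof. You yourself flag the decisive step, namely that the image of $U_a$ lands precisely in the $\mathbb{F}_2$-span of the eta-quotients $q^{dj}\prod_{i\ge 1}(1-q^{di})^{-(at/d-24j)}$ with $d\mid a$ and $0\le j\le \lfloor k/d\rfloor$ (and not in some larger space), as ``the hard part,'' and then defer it to \cite{Ch}. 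That step \emph{is} the lemma: identifying an ambient space in which both sides might live does nothing toward establishing the congruence, and the dimension count and cusp-order bookkeeping you allude to are exactly where the difficulty is concentrated.

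The secondary steps are also left open. In part (i) your induction on $\alpha$ rests on a base case $\alpha=1$ described only as ``a direct analysis of $U_p$ on $\eta(24z)^{-t}$,'' with no indication of how the coefficients $\epsilon^t_{p,d,j}\in\{0,1\}$ are produced or why $\epsilon^t_{p,1,0}=1$; and the recursive matching of $\epsilon^t_{p^\alpha,d,j}$ from level $\alpha-1$ is asserted, not derived. In part (ii), factoring out $\eta(z)^3$ via Jacobi's identity is a reasonable starting point, but it does not by itself yield the claimed decomposition over $d\in\{1,3\}$. In short, the proposal is a roadmap pointing at the right literature rather than a self-contained argument; the correct disposition, and the one the paper takes, is simply to cite \cite{Ch}.
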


The main conjecture of \cite{JZ} can be stated as follows.
\begin{conjecture}[\cite{JZ}, Corollaries to Conjectures 2.3 and 2.4]\label{delta}
{\ }
\begin{itemize}
\item[i)] Suppose there exists an integer $A \equiv \pm 1\ (\bmod\ 6)$ such that $\delta_{A}>0$, and assume $\delta_i$ exists for all $i\le A$, $i\equiv \pm 1\ (\bmod\ 6)$. Then $\delta_1>0$.
\item[ii)] Suppose there exists an integer $A \equiv 3\ (\bmod\ 6)$ such that $\delta_{A}>0$, and assume $\delta_i$ exists for all $i\le A$, $i \equiv 3\ (\bmod\ 6)$. Then $\delta_3>0$.
\end{itemize}
\end{conjecture}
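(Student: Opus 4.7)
The plan is to apply Chen's Lemma \ref{chen} iteratively along the prime factorization of $A$, reducing the subscript of the multipartition function one prime-power factor at a time, until reaching the target: $p_1$ for case (i) or $p_3$ for case (ii). At each step the congruence \eqref{CongOfForm2} of Conjecture \ref{2.4} (now a theorem of Chen in the relevant cases) transfers positive odd density from the larger subscript $at$ down to $t$, on an arithmetic progression modulo $a$.

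For case (i), factor $A = p_1^{\alpha_1} \cdots p_r^{\alpha_r}$ with each $p_i \ge 5$ (since $A$ is coprime to $6$), and induct on $r$. In the inductive step, set $a = p_r^{\alpha_r}$ and $t = A/p_r^{\alpha_r}$, so that $at = A$; Lemma \ref{chen}(i) then yields \eqref{CongOfForm2} for this pair. The $(d,j) = (1,0)$ term on the right is precisely $1/\prod(1-q^i)^A$, the generating function of $p_A$, while every other term has subscript $at/d - 24j < A$. Adapting the density argument from \cite{JZ} (which handled the case $a = A$, $t = 1$), the hypothesis $\delta_A > 0$ forces $\delta_t > 0$. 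Since $t$ has $r-1$ prime factors and $t \equiv \pm 1 \pmod 6$, the inductive hypothesis then yields $\delta_1 > 0$.

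For case (ii), write $A = 3^s \prod_{i=1}^r p_i^{\alpha_i}$ with $s \ge 1$ and $p_i \ge 5$, and proceed in two phases. Phase 1 removes the $p_i^{\alpha_i}$ factors exactly as in case (i), applying Lemma \ref{chen}(i) with $a = p_i^{\alpha_i}$ (the restriction $t \equiv 3 \pmod 6$ is vacuous since $a \equiv \pm 1 \pmod 6$), and leaves $\delta_{3^s} > 0$. If $s \ge 2$, Phase 2 then iteratively applies Lemma \ref{chen}(ii) with $a = 3$ and $t = 3^{s-1}, 3^{s-2}, \ldots, 3$ in succession; each such $t$ is a positive power of $3$ and therefore lies in $\{t \ge 3 : t \equiv 3 \pmod 6\}$ as the lemma requires. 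After $s - 1$ further reductions I arrive at $\delta_3 > 0$.

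The main obstacle is verifying that the density-transfer implication---namely, that \eqref{CongOfForm2} combined with $\delta_{at} > 0$ and the existence of $\delta_i$ for the relevant $i \le A$ forces $\delta_t > 0$ on the progression $an + b$---extends from the setting $(a, t) = (A, 1)$ or $(A, 3)$ treated in \cite{JZ} to the prime-power-$a$ and general odd $t$ setting needed in each inductive step. Structurally the proof is the same: in \eqref{CongOfForm2} the $(1,0)$ term contributes odd density $\delta_{at}$, while every other term has subscript strictly less than $at$ and contributes at most $\delta_{at/d - 24j}/d$. The existence hypothesis makes each of these densities well-defined, and combining them as in \cite{JZ} yields a bound of the shape
\[
a\,\delta_t^{(a,b)} \;\ge\; \delta_{at} - \sum_{(d,j) \ne (1,0)} \frac{\delta_{at/d - 24j}}{d},
\]
where $\delta_t^{(a,b)}$ denotes the odd density of $p_t$ on the arithmetic progression $b \bmod a$. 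Confirming that the right-hand side is strictly positive at every step of the induction---so that cancellation among error terms cannot overwhelm the main contribution $\delta_{at}$---is the technical core that must be ported from \cite{JZ}'s original setting to the iterative, prime-power-$a$ setting used here.
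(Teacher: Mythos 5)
Your overall strategy---invoking Chen's Lemma \ref{chen} with $a$ a prime (power) dividing $A$ and $t=A/a$, then transferring density down the left side of \eqref{CongOfForm2}---is the same starting point as the paper's proof. But there is a genuine gap at exactly the place you flag as ``the technical core.'' The inequality you propose,
\[
a\,\delta_t^{(a,b)} \;\ge\; \delta_{at} - \sum_{(d,j)\ne(1,0)} \frac{\delta_{at/d-24j}}{d},
\]
cannot be established in general: nothing prevents the subtracted terms from exceeding $\delta_{at}$, and mod-$2$ cancellation among summands means no such quantitative bound is available. The paper sidesteps this entirely with a dichotomy you are missing: either some extraneous exponent $B=A/d-24j$ with $(d,j)\ne(1,0)$ satisfies $\delta_B>0$, in which case one does \emph{not} attempt to transfer density to $t$ at all but instead applies the inductive hypothesis directly to $B$ (which satisfies $0<B<A$ and $B\equiv\pm1\ (\bmod\ 6)$); or else \emph{all} such $\delta_B=0$, in which case each error term contributes only $o(x)$ odd coefficients, cancellation can disturb the count by at most $o(x)$, and the right side has odd density exactly $\delta_A$---so the density transfers to $p_{A/p}(pn+b)$ with no inequality to verify.

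This dichotomy forces a second structural change that your proposal also gets wrong: the induction must run over \emph{all} integers $A'<A$ in the relevant congruence class modulo $6$, not over the number of prime factors of $A$. The exponents $B=A/d-24j$ are arbitrary positive integers $\equiv\pm1\ (\bmod\ 6)$ (resp.\ $\equiv 3$) below $A$; they need not divide $A$ and need not have fewer prime factors, so your induction on $r$ has no hypothesis available to apply to them. The paper inducts on $A$ itself (base cases $A=5$ and $A=9$ from \cite{jkza}, Theorem 2), takes $a=p$ a single prime divisor (the largest one in part ii), so that $t=A/p\equiv 3\ (\bmod\ 6)$ automatically), and in both branches of the dichotomy lands on a strictly smaller index in the same congruence class, closing the induction. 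Your two-phase plan for part ii) is not needed once the induction is set up this way.
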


We only remark here that,  while Conjecture \ref{2.4} is stronger than Conjecture \ref{delta}, showing Conjecture \ref{2.4} for \emph{specific} values of $a$ does not in general imply Conjecture \ref{delta} for the same values of $A$. However, using Lemma \ref{chen} for $a$ prime along with a careful inductive argument, we can now provide an unconditional proof of Conjecture \ref{delta}, for all $A$.

\begin{theorem}\label{main}
Conjecture \ref{delta} is true.
\end{theorem}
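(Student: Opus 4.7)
The plan is to prove both parts of Conjecture~\ref{delta} by strong induction on $A$, using Lemma~\ref{chen} at each step to reduce $A$ to a strictly smaller index in the same congruence class modulo $6$. The base cases $A=1$ for Part~(i) and $A=3$ for Part~(ii) are immediate.

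For the inductive step of Part~(i), suppose $A>1$ with $A\equiv\pm 1\ (\bmod\ 6)$ and $\delta_A>0$. I pick any prime divisor $p$ of $A$; since $A$ is coprime to $6$ we have $p\geq 5$, so Lemma~\ref{chen}(i) makes Conjecture~\ref{2.4} available with $a=p$ and $t=A/p$. The resulting congruence~(\ref{CongOfForm2}) expresses $q^{k}\sum_{n}p_{A/p}(pn+b)q^n$ modulo $2$ as a finite sum whose $(d,j)=(1,0)$ contribution is exactly $\sum_n p_A(n)q^n$ (since $\epsilon^{A/p}_{p,1,0}=1$), whose remaining $d=1$ terms are $\mathbb{F}_2$-generating functions of $p_{A-24j}$ for $j\geq 1$, and whose $d=p$ terms are $\mathbb{F}_2$-generating functions of $p_{A/p-24j}$ for $j\geq 0$ (the latter at the substitution $q\mapsto q^p$). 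Every index $A'\neq A$ that appears on the right is strictly less than $A$ and still $\equiv\pm 1\ (\bmod\ 6)$, because $p\equiv\pm 1\ (\bmod\ 6)$ and $24j\equiv 0\ (\bmod\ 6)$.

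Rearranging the congruence so that $\sum_n p_A(n)q^n$ sits alone on one side, the opposite side is a sum of finitely many $\mathbb{F}_2$-power series. The set $\{n:p_A(n)\text{ odd}\}$ has density $\delta_A>0$ and is contained in the union of the supports of these terms, so subadditivity of upper density forces at least one such term to have positive upper density. Each term is the generating function (possibly shifted, restricted to a residue class modulo $p$, or evaluated at $q\mapsto q^p$) of some $p_{A'}$ with $A'<A$ and $A'\equiv\pm 1\ (\bmod\ 6)$; combined with the hypothesis that all such $\delta_{A'}$ exist, positive upper density of the term forces $\delta_{A'}>0$ (for the term coming from the original left-hand side, positivity of the upper density at the residue class $b\pmod p$ implies $\delta_{A/p}>0$ since the overall density dominates the one at a fixed residue class). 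If $A'=1$ we are done; otherwise the inductive hypothesis yields $\delta_1>0$.

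Part~(ii) follows by the same scheme, with one modification in the inductive step: if $A\equiv 3\ (\bmod\ 6)$ has a prime factor $p\geq 5$, I use it exactly as in Part~(i), noting that $A/p$ remains $\equiv 3\ (\bmod\ 6)$; otherwise $A=3^{\alpha}$ with $\alpha\geq 2$, and I instead apply Lemma~\ref{chen}(ii) with $a=3$ and $t=3^{\alpha-1}$, which satisfies $t\equiv 3\ (\bmod\ 6)$ and $t\geq 3$. The main obstacle in the argument is the density bookkeeping: despite possible $\mathbb{F}_2$-cancellations among the right-hand side terms, the subadditivity of upper density together with the existence of all relevant $\delta_i$ is exactly what allows the positivity of $\delta_A$ to transfer to positivity of $\delta_{A'}$ for some $A'<A$, letting the induction close.
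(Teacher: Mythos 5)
Your proof is correct and follows essentially the same route as the paper's: strong induction on $A$, applying Chen's result (Lemma~\ref{chen}) with $a=p$ a prime factor of $A$ and $t=A/p$ so that every other index appearing in the identity is strictly smaller and in the same residue class modulo $6$, then transferring the positivity of the odd density downward using subadditivity together with the assumed existence of the relevant $\delta_i$. The only cosmetic difference is that you take the trivial base cases $A=1$ and $A=3$ instead of citing \cite{jkza}, Theorem 2 for $A=5$ and $A=9$ as the paper does; your inductive step in fact covers those cases directly.
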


\begin{proof}
We start with i), and proceed by induction on $A\ge 5$. The case $A=5$ is already known (see \cite{jkza}, Theorem 2). Thus, assume the result holds up to $A-2$ (or $A-4$, depending on the value of $A$ modulo 3), and let $\delta_A>0$. 

Let $p$ be any prime dividing $A$. Since $p\ge 5$, Lemma \ref{chen}, i) guarantees that Conjecture \ref{2.4} has a solution for $a=p$ and $t=A/p$. That is, we have an identity modulo 2 of the form

\begin{equation}\label{p}
q^k \sum_{n=0}^{\infty} p_{A/p}(pn+b)q^n \equiv \sum_{d=1,p} \sum_{j=0}^{\lfloor k/d \rfloor} \frac{\epsilon_{d,j}\ q^{dj}}{\prod_{i\geq 1} (1-q^{di})^{\frac{A}{d}-24j}},
\end{equation}  
{\ }\\
for $b$ and $k$ determined by $p$ and $A$ as from the statement of Conjecture \ref{2.4}, all $\epsilon_{d,j}=0$ or $1$, and $\epsilon_{1,0}=1$. 

Notice that the condition $\epsilon_{1,0}=1$ implies the existence of the summand
$$\frac{1}{\prod_{i\geq 1} (1-q^{i})^A}$$
on the right side of (\ref{p}). Further, all other nonzero summands on the right side are easily seen to be of the form 
$$\frac{q^{dj}}{\prod_{i\geq 1} (1-q^{di})^B},$$
where $0<B<A$, $B \equiv \pm 1\ (\bmod\ 6)$.

Hence, if any corresponding $\delta_B>0$, by induction we are done. Otherwise, assume all $\delta_B=0$, and as a consequence, note that the number of odd coefficients up to degree $x$ on the right side of (\ref{p}) is given by
$$\delta_A \cdot x +o(x),$$
for $x$ large. Therefore, the odd coefficients  on the right side of (\ref{p}) have density $\delta_A$, and the same obviously holds true for the left side.

Since $p_{A/p}(pn+b)$ denotes the $(A/p)$-multipartition function along the arithmetic progression $pn+b$, it is clear that if this latter has odd density $\delta_A$, then the multipartition function $p_{A/p}(n)$ itself has odd density
$$\delta_{A/p} \geq \frac{1}{p}\cdot \delta_A >0.$$
Since $A/p <A$, $A/p \equiv \pm 1\ (\bmod\ 6)$, the inductive hypothesis again gives $\delta_1>0$, as desired.

The proof of part ii) is similar, so we will only sketch it. The base case to run the induction on $A \equiv 3\ (\bmod\ 6)$ is that $\delta_9>0$  implies $\delta_3>0$, under the usual existence assumptions on the $\delta_i$. This was again shown in \cite{jkza}, Theorem 2. 

Next suppose that $\delta_A>0$, and let $p$ be the largest prime that divides $A$. Set $a=p$ and $t=A/p$. Note that, here, $p=3$ precisely when $A\ge 9$ is a power of 3, so we always have $t\equiv 3\ (\bmod\ 6)$. Therefore, using both parts of Lemma \ref{chen}, we are again guaranteed the existence of an identity of the form (\ref{p}).

Now notice that, on the right side of (\ref{p}), the exponent $$\frac{A}{d}-24j$$ is always positive, congruent to 3 $(\bmod\ 6)$, and except when $d=1$ and $j=0$, it is smaller than $A$. The rest of the argument to show that $\delta_3>0$ is identical to part i). This completes the proof of the theorem.
\end{proof}

\begin{remark}
Recall that, even though Conjecture \ref{delta} has been proven, Conjecture \ref{2.4} remains open for most values of $a$. We believe these identities modulo 2 to be of significant independent interest, so we hope Chen's result from \cite{Ch} (most likely combined with a new idea) can be extended to show Conjecture \ref{2.4} in full.

Finally, despite multiple attempts, we have not been able to relate the conjectural positive density of $p(n)$ to that of $p_3(n)$. One issue in this sense is that no identity modulo 2, similar to those of Conjecture \ref{2.4} but simultaneously involving $p(n)$ and $p_3(n)$, appears to exist. Thanks to Theorem \ref{main}, establishing that $\delta_3>0$ implies $\delta_1>0$ would immediately give us, under standard existence assumptions on the $\delta_i$, that $p(n)$ has positive odd density whenever \emph{any} multipartition function $p_A(n)$ does.
\end{remark}

\section*{Acknowledgements} 
We warmly thank Steven J. Miller for discussions on the submission. We are also grateful to Samuel Judge and Shi-Chao Chen; without their important contributions to this line of research, our result would have not been possible. This work was partially supported by a Simons Foundation grant (\#630401).

\end{document}